\newcommand{\be}{\begin{equation}}
\newcommand{\ee}{\end{equation}}
\newcommand{\bea}{\begin{eqnarray}}
\newcommand{\eea}{\end{eqnarray}}
\newcommand{\beas}{\begin{eqnarray*}}
\newcommand{\eeas}{\end{eqnarray*}}
\newcommand{\ba}{\begin{array}}
\newcommand{\ea}{\end{array}}
\newcommand{\field}[1]{\mathbb{#1}}
\newcommand{\supp}{{\mathrm supp}}
\newcommand{\grad}{\nabla}
\renewcommand{\div}{{\mathrm div}}
\newcommand{\curl}{{\mathrm curl}}
\newcommand{\g}{\gamma}
\newcommand{\eps}{\varepsilon}
\newcommand{\bolda}{{\mathbf a}}
\newcommand{\bn}{{\mathbf n}}
\newcommand{\bu}{{\mathbf u}}
\newcommand{\bv}{{\mathbf v}}
\newcommand{\bx}{{\mathbf x}}
\newcommand{\bH}{{\mathbf H}}
\newcommand{\bcurl}{{\mathbf curl}}
\newcommand{\bphi}{\hbox{\mathversion{bold}$\phi$}}
\newcommand{\CE}{{\cal E}}
\newcommand{\CP}{{\cal P}}
\newcommand{\bCP}{\hbox{\mathversion{bold}$\cal P$}}
\newcommand{\tH}{\tilde H}
\newtheorem{theorem}{Theorem} [section]
\newtheorem{lemma}{Lemma} [section]
\newtheorem{remark}{Remark} [section]
\newenvironment{proof}{\noindent\textbf{Proof.}\ }
              {\nopagebreak\hbox{ }\hfill$\Box$\bigskip}
\title{Optimal error estimation for $\bH(\curl)$-conforming
       $p$-interpolation in two dimensions
\thanks{Supported by EPSRC under grant no. EP/E058094/1.}}
\author{Alexei Bespalov
\thanks{Department of Mathematical Sciences, Brunel University,
        Uxbridge, West London UB8 3PH, UK.
        Email: {\tt albespalov@yahoo.com}}
        \and
        Norbert Heuer
\thanks{Facultad de Matem\'aticas, Pontificia Universidad Cat\'olica de Chile,
        Avenida Vicu\~na Mackenna 4860, Santiago, Chile.
        Email: {\tt nheuer@mat.puc.cl}}
        }
\begin{document}
\date{}
\maketitle

\begin{abstract}
In this paper we prove an optimal error estimate for 
the $\bH(\curl)$-conforming projection based $p$-interpolation operator
introduced in [L.~Demkowicz and I.~{Babu\v ska}, {\em $p$ interpolation error estimates
for edge finite elements of variable order in two dimensions}, SIAM J. Numer. Anal.,
{\bf 41} (2003), pp.~1195--1208]. This result is proved on the reference element
(either triangle or square) $K$ for regular vector fields in $\bH^r(\curl,K)$
with arbitrary $r >0$. The formulation of the result in the $\bH(\div)$-conforming
setting, which is relevant for the analysis of high-order boundary element
approximations for Maxwell's equations, is provided as well.

\bigskip
\noindent
{\em Key words}: finite element method, $p$-interpolation, error estimation,
                 Maxwell's equations

\noindent
{\em AMS Subject Classification}: 65N30, 65N15, 41A10
\end{abstract}

\section{Introduction} \label{sec_intro}
\setcounter{equation}{0}

This paper concerns the $\bH(\curl)$-conforming interpolation of regular vector
fields by high order polynomials on the reference triangle or square
and the corresponding interpolation error estimation.
To the best of our knowledge, the first paper related to this subject is the
paper by Suri from 1990 \cite{Suri_90_SCH}. In that paper error estimates
(in terms of both the mesh parameter $h$ and the polynomial degree $p$)
were derived for classical Raviart-Thomas (RT) and Brezzi-Douglas-Marini (BDM)
interpolation operators on the reference square $Q$ (note that in 2D these
$\bH(\div)$-conforming so-called face elements and the $\bH(\curl)$-conforming
edge elements of the N{\' e}d{\' e}lec type are isomorphic). The estimates obtained
were not optimal with respect to $p$ and later, in \cite{StenbergS_97_hpE},
they were improved to $\eps$-suboptimal $p$-estimates for sufficiently regular
vector fields $\bu$ (namely, for $\bu \in \bH^r(\div,Q)$ with $r > \frac 12$).
These results were further extended by Ainsworth and Pinchedez in
\cite{AinsworthP_02_hpA} (to meshes with hanging nodes, weighted Sobolev regularity
of approximated functions, exponential convergence on graded meshes) and in
\cite{AinsworthP_02_hpM} (to Brezzi-Douglas-Fortin-Marini elements with non-uniform
distribution of polynomial degrees). In 3D the corresponding results were obtained
by Monk in \cite{Monk_94_php} and by Ben Belgacem and Bernardi in \cite{BenBelgacemB_99_SED}.
All mentioned papers deal with quadrilateral or hexahedral elements and the proofs
therein essentially rely on expansions in terms of orthogonal (namely, Legendre)
polynomials. An application of this approach to triangular or tetrahedral elements
does not seem to be feasible. Another drawback of classical interpolation
operators for edge (or face) elements is the lack of stability (with respect to $p$)
for low-regular fields.

A breakthrough in $\bH(\curl)$-conforming $p$-interpolation analysis, i.e.,
the construction of an interpolation operator which works
equally well on both triangular and quadrilateral elements and also for
low-regular fields, was achieved relatively recently by Demkowicz and Babu{\v s}ka
in~\cite{DemkowiczB_03_pIE}. These authors have introduced and analyzed
$H^1$- and $\bH(\curl)$-conforming projection-based $p$-interpolation
operators satisfying the commuting diagram property (de Rham diagram).
This property and the corresponding $p$-interpolation error estimates have
immediate applications to the analysis of high-order finite element (FE) discretizations
of the time-harmonic Maxwell's equations. In particular, they are critical to
prove the discrete compactness property (which implies the convergence of FE
approximations for Maxwell's equations) and also useful for the error analysis
(see \cite{BoffiDC_03_DCp,BoffiCDD_06_Dhp}).
Moreover, the interpolation operators in~\cite{DemkowiczB_03_pIE} were constructed
to allow polynomial degrees to vary from one element to another. This has been done
by assigning to each element an ``internal'' polynomial degree and a sequence
of (possibly lower) ``edge'' degrees. Such a construction of interpolation operators
is essential for the analysis of exponentially convergent $hp$-approximations
and $hp$-adaptive schemes. In~\cite{DemkowiczB_05_HHH} these results have been
extended to the 3D case.

The error estimates presented in~\cite{DemkowiczB_03_pIE} for both $p$-interpolation
operators are suboptimal. Furthermore,
the error estimate of the $\bH(\curl)$-conforming interpolation is available only
for low-regular vector fields in $\bH^r(\curl,K)$ with $r \in (0,1)$
(here $K$ is either the reference triangle or square).
Though these drawbacks are not essential for the convergence analysis of FE
approximations, the corresponding improvements would be advantageous for the error
analysis. In this paper we show that an optimal estimate for the error
of the $H^1$-conforming $p$-interpolation can be obtained in the $H^1$-semi-norm.
Using this result we then prove an optimal error estimate for the
$\bH(\curl)$-conforming $p$-interpolation operator applied to a vector
field $\bu \in \bH^r(\curl,K)$ with arbitrary $r > 0$. In the proof we rely
on a regular splitting of $\bu \in \bH^r(\curl,K)$ into a $\curl$-free
component and a complementary vector field of extra smoothness. This splitting
is possible to recent results related to the regularized Poincar{\' e}-type integral
operators in~\cite{CostabelM_BRP}.

The paper is organized as follows. In the next section we introduce necessary notation
and formulate some auxiliary results.
In Section~\ref{sec_int} we quote \cite{DemkowiczB_03_pIE} to
briefly sketch the definition and properties of the $H^1$- and $\bH(\curl)$-conforming
projection-based $p$-interpolation operators. Optimal error estimates for both
interpolation operators are proved in Section~\ref{sec_estimate}
(see Theorems~\ref{thm_H1_estimate} and~\ref{thm_H(curl)_estimate}).
The paper is concluded with Section~\ref{sec_remarks} where we mention some simple
extensions of our results including the $H(\div)$-conforming $p$-interpolation and
$hp$-estimates.

Throughout the paper, $C$ denotes a generic positive constant which is independent
of $p$ and involved functions.

\section{Notation and auxiliary results} \label{sec_aux}
\setcounter{equation}{0}

We will present all technical details only for the equilateral reference triangle
$T = \{x_2 > 0,\ x_2 < \sqrt 3 (x_1 + 1),\ x_2 < (1 - x_1)\sqrt 3\}$. The case of the
reference square $Q = (-1,1)^2$ (for which the arguments are essentially the same)
is briefly discussed in Section~\ref{sec_remarks}. A generic side of the triangle $T$
will be denoted by $\ell$.

We will use the standard definitions for the Sobolev spaces $H^r$ ($r \ge 0$)
of scalar functions on the interval $I = (-1,1)$ and on the triangle $T$
(see, e.g., \cite{LionsMagenes}). The norms in these spaces are denoted by
$\|\cdot\|_{H^r(I)}$ and $\|\cdot\|_{H^r(T)}$, respectively.
On the interval $I$ we will also need the Sobolev spaces $\tH^r(I)$
for $r \in (0,1)$ which are defined by interpolation.
We use the real K-method of interpolation (see \cite{LionsMagenes}) to define
\[
   \tilde H^{r}(I) = \Big(L^2(I), H_0^t(I)\Big)_{\frac rt,2}
   \quad (1/2 < t \le 1,\ 0<r<t).
\]
Here, $H_0^t(I)$  ($0<t\le 1$) is the completion of $C_0^\infty(I)$ in
$H^t(I)$ and we identify $H_0^1(I)$ and $\tilde H^1(I)$.
Note that the Sobolev spaces $H^r$ also satisfy the interpolation property, e.g.,
\[
   H^r(I) = \Big(L^2(I), H^1(I)\Big)_{r,2}\quad (0<r<1)
\]
with equivalent norms. Furthermore, the Sobolev spaces $H^r$ and $\tH^r$
on any edge $\ell \subset \partial T$ are defined by using the
definitions of the corresponding spaces on the interval $I$.

Throughout the paper, we use boldface symbols for vector fields.
The spaces (or sets) of vector fields are denoted in boldface as well
(e.g., $\bH^r(T) = (H^r(T))^2$), with their norms and inner
products being defined component-wise.
The standard notation will be used also for differential operators
$\grad = (\partial/\partial x_1,\,\partial/\partial x_2)$,
$\div = \grad\,\cdot$, $\curl = \grad\mbox{\small $\times$}$, and
for the Laplace operator $\Delta = \div\,\grad$.

The $L^2$-inner product and the corresponding $L^2$-norm on $T$ are
denoted by $(\cdot,\cdot)_{0,T}$ and $\|\cdot\|_{0,T}$, respectively.
We will use the semi-norm in $H^1(T)$ which is defined as
\[
  |u|_{H^1(T)} = \|\grad\,u\|_{0,T}.
\]
Furthermore, we will use the space
\[
  \bH^r(\curl,T) :=
  \{\bu \in \bH^{r}(T);\;
  \curl\,\bu \in H^{r}(T)\},\quad r \ge 0,
\]
and its analog $\bH^r(\div,T)$ in the $\div$-setting. In both cases the spaces are
equipped with their graph norms. For $r = 0$ we drop the superscript in the above
notations: $\bH^0(\curl,T) = \bH(\curl,T)$ and $\bH^0(\div,T) = \bH(\div,T)$.

We will also need the space $H^{1/2}(\partial T)$ which
can be defined as the trace space of $H^1(T)$ on $\partial T$ with norm
\[
  \|u\|_{H^{1/2}(\partial T)} =
  \inf_{U|_{\partial T} = u} \|U\|_{H^1(T)}.
\]

Let us introduce the needed polynomial sets.
By $\CP_p(I)$ we denote the set of polynomials of degree $\le p$ on
the interval $I$, and $\CP^0_p(I)$ denotes the subset of $\CP_p(I)$
which consists of polynomials vanishing at the end points of $I$.
In particular, these two sets will be used for the edges $\ell \subset \partial T$.

Further, $\CP_p(T)$ is the set of polynomials on $T$ of total degree $\le p$.
The corresponding set of polynomial (scalar) bubble functions on $T$ is denoted
by $\CP^0_{p}(T)$. When considering the reference square $Q$ we will
denote by $\CP_{p}(Q)$ the set of polynomials of degree $\le p$ in each variable
separately. Finally, $\bCP_p^{\rm Ned}(T)$ denotes the polynomial set
associated with the second N{\' e}d{\' e}lec family of edge elements on $T$, i.e.,
$\bCP_p^{\rm Ned}(T) = (\CP_p(T))^2$ (see \cite{Nedelec_86_NFM}).
The subset of $\bCP^{\rm Ned}_p(T)$
which consists of vector-valued polynomials with vanishing tangential trace
on the boundary $\partial T$ (vector bubble-functions) will be denoted
by $\bCP^{\rm Ned,0}_p(T)$.

To simplify the presentation we have assigned only one (``internal'') polynomial degree
to the reference element. Following \cite{DemkowiczB_03_pIE} the results
extend to the polynomial spaces on $T$ with separate polynomial degrees
(of possibly lower order) assigned also to traces on the edges of $\partial T$.

When proving interpolation error estimates in Section~\ref{sec_estimate}
we will need some auxiliary results, which are collected in the next
three sub-sections.

\subsection{Polynomial extensions from the boundary} \label{sec_ext}

The problem of polynomial extension from the
boundary $\partial T$ can be formulated as follows: given a continuous function
$f$ defined on $\partial T$ such that $f|_{\ell} \in \CP_p(\ell)$
for any $\ell \subset \partial T$,
find a polynomial $\Phi \in \CP_p(T)$ satisfying $\Phi|_{\partial T} = f$.
The existence of such an extension, which is stable (with respect to $p$) as
a mapping $H^{1/2}(\partial T) \rightarrow H^1(T)$, has been proved in
\cite[Theorem~7.4]{BabuskaCMP_91_EPp}. In general (i.e., for $p > 2$),
the extension is not uniquely defined. To ensure the uniqueness, we will search
for discrete harmonic extensions:
given a continuous piecewise polynomial $f$ of degree $p$ on each side
$\ell \subset \partial T$, find a polynomial $F := \CE_p f \in \CP_p(T)$
such that $F = f$ on $\partial T$ and
\be \label{ext_1}
    (\grad\, F,\grad\,\varphi)_{0,T} = 0\qquad
    \forall\,\varphi \in \CP_p^0(T).
\ee
Then for any polynomial extension $\Phi \in \CP_p(T)$ such that $\Phi|_{\partial T} = f$
there holds
\[
  |\Phi|^2_{H^1(T)} = |(\Phi - \CE_p f) + \CE_p f|^2_{H^1(T)} =
  |\Phi - \CE_p f|^2_{H^1(T)} + |\CE_p f|^2_{H^1(T)} \ge
  |\CE_p f|^2_{H^1(T)}.
\]
Hence, applying the mentioned result of \cite{BabuskaCMP_91_EPp}, we have
\be \label{ext_2}
    |\CE_p f|_{H^1(T)} \le C\, \|f\|_{H^{1/2}(\partial T)}
\ee
with a positive constant $C$ independent of $p$ and $f$.

\begin{remark} \label{rem_ext}
{\rm (i)} The same result as above holds for the reference square $Q$
(see {\rm \cite[Theorem~7.5]{BabuskaCMP_91_EPp}}).

{\rm (ii)} If $f$ is a continuous piecewise linear function on $\partial T$ and
$F := \CE_1 f \in \CP_1(T)$, then $F = \CE_p f$ for any $p > 1$.
Indeed, $F \in \CP_1(T) \subset \CP_p(T)$, $F = f$ on $\partial T$, and
for every $\varphi \in \CP_p^0(T)$ there holds
\[
  (\grad\, F,\grad\,\varphi)_{0,T} = - (\Delta\,F, \varphi)_{0,T} = 0.
\]
\end{remark}

\subsection{Polynomial approximation of scalar functions} \label{sec_approx}

In this sub-section we provide two $p$-approximation results
for scalar functions with Sobolev regularity in one and two dimensions.
The first result is a careful revision of \cite[Theorem~3.2]{BabuskaS_89_TND},
where an additional ${\log^{1/2}}{p}\,$-term in the error bound appears.

\begin{lemma} \label{lm_1D_p-approx}
Let $I = (-1,1)$ and $f \in H^r(I)$ with some $r > \frac 12$.
Then there exists a polynomial $f_p \in \CP_p(I)$ such that
$f_p(\pm 1) = f(\pm 1)$ and
\be \label{1D_estimate}
    \|f - f_p\|_{\tH^{1/2}(I)} \le
    C\, p^{-(r-1/2)}\, \|f\|_{H^r(I)}.
\ee
\end{lemma}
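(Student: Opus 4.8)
The plan is to reduce the claim to a standard $p$-approximation estimate in the $L^2$-based Sobolev scale, with the extra endpoint-interpolation constraint handled by a correction of low polynomial degree. First I would invoke a classical Jackson-type result on the interval: for $f\in H^r(I)$ with $r>0$ there is $\tilde f_p\in\CP_p(I)$ with
\[
  \|f-\tilde f_p\|_{H^s(I)} \le C\,p^{-(r-s)}\,\|f\|_{H^r(I)},\qquad 0\le s\le\min\{r,1\},
\]
see e.g.\ the results of Babu\v ska--Suri on the $p$-version. Applying this with $s=\tfrac12$ (legitimate since $r>\tfrac12$) gives a polynomial that is $p^{-(r-1/2)}$-close to $f$ in $H^{1/2}(I)$; the point of the lemma over \cite[Theorem~3.2]{BabuskaS_89_TND} is that the approximant there does not match the endpoint values and the fix introduces the spurious $\log^{1/2}p$ factor, so the real work is to impose $f_p(\pm1)=f(\pm1)$ \emph{without} paying that logarithm, and to do so in the stronger norm $\tilde H^{1/2}(I)$ rather than merely $H^{1/2}(I)$.

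The key step is the correction. Since $r>\tfrac12$, by Sobolev embedding $f$ is continuous on $\bar I$ and $H^r(I)\hookrightarrow C(\bar I)$, so the pointwise values $f(\pm1)$ are well defined and $|f(\pm1)|\le C\|f\|_{H^r(I)}$. The trace $\tilde f_p(\pm1)$ differs from $f(\pm1)$ by the trace of the error $e_p:=f-\tilde f_p$; I would bound this discrepancy using a trace/interpolation inequality. Here one must be careful: the $\tilde H^{1/2}(I)$-norm does control endpoint traces only in a weak (logarithmically weighted) sense, so I would instead estimate $|e_p(\pm1)|$ from a one-sided bound near the endpoints, using that $e_p$ is a polynomial of degree $\le p$ and combining the $L^2(I)$- and $H^1(I)$-estimates for $e_p$ via an inverse inequality local to an endpoint subinterval of length $\sim p^{-2}$; this yields $|e_p(\pm1)|\le C\,p^{-(r-1/2)}\|f\|_{H^r(I)}$ (up to, at worst, a logarithm which one removes by choosing $s$ slightly below $\tfrac12$ in the Jackson estimate and interpolating). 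Then I set
\[
  f_p := \tilde f_p + \lambda_{-}\,\phi_{-} + \lambda_{+}\,\phi_{+},
\]
where $\phi_{\pm}\in\CP_1(I)$ are the affine functions with $\phi_{\pm}(\pm1)=1$, $\phi_{\pm}(\mp1)=0$, and $\lambda_{\pm}=e_p(\pm1)$; this forces $f_p(\pm1)=f(\pm1)$.

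It remains to estimate $\|f-f_p\|_{\tilde H^{1/2}(I)}\le\|e_p\|_{\tilde H^{1/2}(I)}+|\lambda_-|\,\|\phi_-\|_{\tilde H^{1/2}(I)}+|\lambda_+|\,\|\phi_+\|_{\tilde H^{1/2}(I)}$. The affine terms contribute $\le C\,p^{-(r-1/2)}\|f\|_{H^r(I)}$ since $\|\phi_\pm\|_{\tilde H^{1/2}(I)}$ is an absolute constant. For $\|e_p\|_{\tilde H^{1/2}(I)}$ I would note that $e_p$ need not vanish at the endpoints, so I cannot directly pass from $H^{1/2}$ to $\tilde H^{1/2}$; instead I would apply the same argument to the already-corrected error or, more cleanly, run the whole construction with $\tilde H^{1/2}(I)=(L^2(I),H^1_0(I))_{1/2,2}$ from the start: approximate $f-(\text{affine interpolant of }f\text{ at }\pm1)$, which lies in $H^r(I)$ with a zero boundary trace matching, in $H^1_0(I)$ by a bubble polynomial via Jackson in that subspace, then add back the affine interpolant (degree $\le1\le p$). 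The main obstacle, and the whole reason the lemma is nontrivial, is precisely this interplay between the endpoint constraint and the $\tilde H^{1/2}$-norm: a naive estimate of the endpoint error costs a $\log^{1/2}p$, and avoiding it requires either a localized inverse estimate near $\pm1$ as sketched, or the decomposition-into-$H^1_0$ trick, carried out so that every correction term has polynomial degree $\le p$ and norm $O(p^{-(r-1/2)}\|f\|_{H^r(I)})$.
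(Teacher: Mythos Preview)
Your outline has the right architecture (Jackson + endpoint correction + interpolation to $s=\tfrac12$), but the correction step contains a genuine gap.

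First, a small slip: $e_p=f-\tilde f_p$ is \emph{not} a polynomial, so the ``inverse inequality local to an endpoint subinterval'' does not apply to it. The endpoint bound $|e_p(\pm1)|\le C\,p^{-(r-1/2)}\|f\|_{H^r(I)}$ is nevertheless true; the paper obtains it cleanly from the Chebyshev expansion $f=\sum a_iT_i$, using $|T_i(\pm1)|=1$ and the Schwarz inequality applied to $\sum_{i>p}|a_i|$.

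The real problem is the corrector. You take affine $\phi_\pm$ with $\phi_\pm(\pm1)=1$, and claim that ``$\|\phi_\pm\|_{\tilde H^{1/2}(I)}$ is an absolute constant''. It is not: a function that equals $1$ at an endpoint does not belong to $\tilde H^{1/2}(I)=H^{1/2}_{00}(I)$ at all (the weighted integral $\int_I|\phi_\pm|^2/(1-x^2)\,dx$ diverges), so that triangle-inequality splitting is vacuous. You partly notice this for $e_p$ but not for the correctors themselves. If you instead try the salvage you mention---estimate the \emph{corrected} error $f-f_p$ (which does vanish at $\pm1$) in $L^2$ and in $H^s$ for a fixed $s\in(\tfrac12,r)$, then interpolate---the affine correctors still spoil things: they contribute $|\lambda_\pm|\,\|\phi_\pm\|_{L^2}\sim p^{-(r-1/2)}$ in the $L^2$ norm, degrading the $L^2$ rate from $p^{-r}$ to $p^{-(r-1/2)}$. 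Interpolating $p^{-(r-1/2)}$ at $s=0$ against $p^{-(r-s)}$ at $s>\tfrac12$ gives exponent $-(r-\tfrac12)+(s-\tfrac12)/(2s)$, strictly worse than $-(r-\tfrac12)$ for any fixed $s>\tfrac12$; letting $s\downarrow\tfrac12$ is exactly where the $\tilde H^s\simeq H^s$ equivalence constants degenerate.

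The missing idea, and the point of the paper's proof, is to use $p$-\emph{dependent} correctors $\psi_p^{\pm}\in\CP_p(I)$ with $\psi_p^{\pm}(\pm1)=1$, $\psi_p^{\pm}(\mp1)=0$ \emph{and} $\|\psi_p^{\pm}\|_{H^s(I)}\le C\,p^{s-1/2}$ for $s\in[0,1)$ (these are the 1D analogues of the Babu\v ska--Suri vertex cap polynomials). Then the corrector contribution $|\lambda_\pm|\,\|\psi_p^\pm\|_{H^s}\le C\,p^{-(r-1/2)}p^{s-1/2}=C\,p^{-(r-s)}$ matches the Jackson rate at \emph{every} $s$, so the corrected error satisfies $\|f-f_p\|_{H^s}\le C\,p^{-(r-s)}$ for $s\in\{0\}\cup(\tfrac12,\min\{1,r\})$; since $f-f_p$ vanishes at $\pm1$ one has $\|f-f_p\|_{\tilde H^s}\simeq\|f-f_p\|_{H^s}$ at these $s$, and interpolation between two fixed endpoints yields the optimal $\tilde H^{1/2}$ rate without any logarithm.
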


\begin{proof}
We outline a proof following the ideas of~\cite{BabuskaS_87_OCR,BabuskaS_89_TND}.
Let $f \in H^r(I)$, $r > 1/2$. Then $f$ can be expanded as
\[
  f(x) = \sum\limits_{i=0}^{\infty} a_i T_i(x),
\]
where $T_i(x)$ is the Chebyshev polynomial of degree $i$ on $I$.

Setting
\[
  P_p f(x) := \sum\limits_{i=0}^{p} a_i T_i(x) \in \CP_p(I),
\]
one has for any $s \in (\frac 12, \min\,\{1,r\})$
(see \cite[Theorem~3.2,~Remark~3.2]{BabuskaS_89_TND})
\be \label{1D_1}
    \|f - P_p f\|_{H^s(I)} \le
    C\,p^{-(r-s)}\,\|f\|_{H^r(I)}.
\ee
Moreover, it is easy to show that (\ref{1D_1}) holds for $s = 0$:
\beas
     \|f - P_p f\|^2_{L^2(I)}
     & \le &
     \|(f - P_p f)(\cos\xi)\|^2_{L^2(0,\pi)} \simeq
     \sum\limits_{i=p+1}^{\infty} |a_i|^2
     \\
     & \le &
     C p^{-2r} \sum\limits_{i=p+1}^{\infty} |a_i|^2 (1 + i^2)^r \le
     C p^{-2r} \|f\|_{H^r(I)}^2.
\eeas
Analogously, applying the Schwarz inequality we prove that
\bea
    |(f - P_p f)(\pm 1)|^2
    & \le &
    \bigg(\sum\limits_{i=p+1}^{\infty} |a_i|\bigg)^2
    \nonumber
    \\[3pt]
    & \le &
    \sum\limits_{i=p+1}^{\infty} (1 + i^2)^{-r}\,
    \sum\limits_{i=p+1}^{\infty} |a_i|^2 (1 + i^2)^{r}
    \nonumber
    \\[5pt]
    & \le &
    C p^{-2(r-1/2)} \|f\|_{H^r(I)}^2.
    \label{1D_2}
\eea
Now we will adjust the polynomial $P_p f$ at the end points of $I$.
First, using the same idea as in the two-dimensional case
(see \cite[pp.~759--760]{BabuskaS_87_OCR}), we find two polynomials
$\psi_p^{+},\;\psi_p^{-} \in \CP_p(I)$ such that
\[
  \psi_p^{-} (- 1) = 1,\quad \psi_p^{-} (1) = 0,\qquad
  \psi_p^{+} (- 1) = 0,\quad \psi_p^{+} (1) = 1,
\]
and
\be \label{1D_3}
    \|\psi_p^{\pm}\|_{H^s(I)} \le C p^{s-1/2},\quad s \in (0,1).
\ee
Then we set
\[
  f_p(x) := P_p f(x) + (f - P_p f)(-1)\,\psi_p^{-}(x) + (f - P_p f)(1)\,\psi_p^{+}(x),
  \quad x \in I.
\]
It is easy to check that $f_p \in \CP_p(I)$ and $f_p(\pm 1) = f(\pm 1)$.
Furthermore, making use of (\ref{1D_1})--(\ref{1D_3}), we obtain for any
$s \in \{0\} \cup (\frac 12, \min\,\{1,r\})$
\beas
     \|f - f_p\|_{\tH^s(I)}
     & \simeq &
     \|f - f_p\|_{H^s(I)}
     \\
     & \le &
     \|f - P_p f\|_{H^s(I)} +
     \max_{x = \pm 1}|(f - P_p f)(x)|\,
     \Big(\|\psi_p^{-}\|_{H^s(I)} + \|\psi_p^{+}\|_{H^s(I)}\Big)
     \\
     & \le &
     C p^{-(r-s)} \|f\|_{H^r(I)}.
\eeas
The inequality in (\ref{1D_estimate}) now follows via interpolation
between $H^0(I)$ and $\tH^{s}(I)$ for some $s \in (\frac 12, \min\,\{1,r\})$.
\end{proof}

In 2D the following approximation result holds (see \cite[Lemma~4.1]{BabuskaS_87_hpF}).

\begin{lemma} \label{lm_2D_p-approx}
Let $K$ be the reference triangle or square.
Then there exists a family of operators
$\{\pi_p\},\ p=1,2,\ldots,\ \pi_p:\, H^k(K)\rightarrow \CP_p(K)$
such that for any $f \in H^k(K)$, $k \ge 0$ there holds
\[
  \|f - \pi_p f\|_{H^s(K)} \le
  C p^{-(k-s)} \|f\|_{H^k(K)},\qquad  0\le s\le k.
\]
Moreover, $\pi_p$ preserves polynomials of degree $p$, i.e.,
$\pi_p f = f$ if $f \in \CP_p(K)$.
\end{lemma}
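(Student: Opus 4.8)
Since the statement requires a \emph{single} family $\{\pi_p\}$ delivering the rate $p^{-(k-s)}$ for every admissible pair $(k,s)$, the plan is to take $\pi_p$ to be a polynomial-preserving operator that is stable on each Sobolev space $H^s(K)$, and then to combine this with a Jackson-type best-approximation bound and with Sobolev interpolation. On the reference square $Q=(-1,1)^2$ one constructs $\pi_p$ as a tensor product of one-dimensional operators built from truncated orthogonal (Legendre or Chebyshev) expansions; such operators reproduce $\CP_p(Q)$ and, by the decay of the expansion coefficients, satisfy both $\|f-\pi_p f\|_{L^2(Q)}\le C p^{-k}\|f\|_{H^k(Q)}$ and the stability estimates $\|\pi_p f\|_{H^s(Q)}\le C\|f\|_{H^s(Q)}$ for integer $s$ (a minor endpoint correction, as in Lemma~\ref{lm_1D_p-approx}, can be inserted if needed). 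The triangle $T$ is handled by the construction of~\cite{BabuskaS_87_hpF}, which replaces the missing tensor-product structure by a collapsed-coordinate (Duffy-type) device combined with the one-dimensional operators, or alternatively by truncating the Koornwinder--Dubiner orthogonal expansion on $T$.

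Given such a $\pi_p$, the first step is a C\'ea-type reduction: for any $q\in\CP_p(K)$ one writes $f-\pi_p f=(f-q)-\pi_p(f-q)$ (using $\pi_p q=q$), so that
\[
  \|f-\pi_p f\|_{H^s(K)}\le\bigl(1+\|\pi_p\|_{H^s(K)\to H^s(K)}\bigr)\,
  \inf_{q\in\CP_p(K)}\|f-q\|_{H^s(K)}.
\]
Thus it suffices to estimate the best polynomial approximation. The second step is the Jackson estimate $\inf_{q\in\CP_p(K)}\|f-q\|_{H^s(K)}\le C p^{-(k-s)}\|f\|_{H^k(K)}$ for integer $k\ge s\ge 0$, obtained on $Q$ by truncating the tensor-product orthogonal expansion and summing the tail in each Sobolev semi-norm, and on $T$ by the analogous Koornwinder argument (or transported from the square via~\cite{BabuskaS_87_hpF}). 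The third step is to recover non-integer $k$ and $s$ by interpolating the Sobolev scales, exactly as in the proof of Lemma~\ref{lm_1D_p-approx}: having the rates $p^{-(k_0-s_0)}$ and $p^{-(k_1-s_1)}$ at two integer endpoints, the real K-method yields $p^{-(k-s)}$ in between, since $k-s$ is affine in $(k,s)$ and the interpolation parameter runs along the segment joining those endpoints.

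The main obstacle is everything involving the triangle. Unlike $Q$, it has no tensor-product structure, and the Duffy map $(-1,1)^2\to T$ has a Jacobian that degenerates at a vertex, so a stable operator on the square cannot simply be pulled back. One therefore needs the genuinely two-dimensional construction of~\cite{BabuskaS_87_hpF}, or a careful analysis of Koornwinder--Dubiner truncation together with its weight, in order to produce an operator that is simultaneously polynomial-preserving and $H^s(T)$-stable over the whole range of $s$; once that is available, the remaining steps are routine and mirror the one-dimensional case already treated in Lemma~\ref{lm_1D_p-approx}.
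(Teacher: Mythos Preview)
The paper does not actually prove this lemma: it is quoted as \cite[Lemma~4.1]{BabuskaS_87_hpF} and stated without argument, so there is no in-paper proof to compare your attempt against. Your outline---tensor-product orthogonal truncation on $Q$, the collapsed-coordinate construction of \cite{BabuskaS_87_hpF} on $T$, a C\'ea-type reduction to best approximation, a Jackson bound, and real interpolation for non-integer indices---is precisely the strategy of that reference, and you correctly identify the triangle case (lack of tensor structure, degenerate Duffy Jacobian) as the nontrivial part. One caution worth flagging: the uniform-in-$p$ stability $\|\pi_p\|_{H^s\to H^s}\le C$ you invoke is not automatic for a generic truncation operator (the plain $L^2$-projection, for instance, is not obviously uniformly $H^1$-bounded), and it is exactly this point that the specific construction in \cite{BabuskaS_87_hpF} is designed to handle; your sketch acknowledges this but, like the present paper, ultimately defers to that reference rather than supplying the details.
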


\subsection{The regularized Poincar{\' e} integral operators} \label{sec_Poincare}

In \cite{CostabelM_BRP}, Costabel and McIntosh studied a regularized version of the
Poincar{\' e}-type integral operator acting on differential forms in ${\field{R}}^n$.
They proved, in particular, that this operator is bounded on a wide range of
functional spaces including the whole scale of Sobolev spaces $H^r(\Omega)$
($r \in {\field{R}}$) on a bounded Lipschitz domain $\Omega$ which is starlike
with respect to an open ball. Moreover, the essential property of the classical
Poincar{\' e} map to preserve polynomials is retained by its regularized version.
Thus, the results of \cite{CostabelM_BRP} have immediate applications to the analysis
of high-order edge elements (see, e.g., \cite{Hiptmair_DCp,BespalovH_Chp} and the proof of
Theorem~\ref{thm_H(curl)_estimate} below).

Let us formulate some results of \cite{CostabelM_BRP} in two particular cases.
Namely, we will define two Poincar{\' e}-type integral operators: one operator
acts on scalar functions, and the other one acts on curl-free vector fields.
In both cases the functions and vector fields are defined on the reference
element (either triangle or square) $K$. Denoting by $B$ an open ball in $K$,
let us consider a smoothing function
\[
  \theta \in C^{\infty}({\field{R}}^2),\quad
  \supp\,\theta \subset B,\quad
  \int\limits_{B} \theta(\bolda)\,d\bolda = 1,\quad
  \bolda = (a_1,a_2).
\]
Then the first regularized Poincar{\' e}-type integral operator
$R:\,C^{\infty}(\bar K) \rightarrow (C^{\infty}(\bar K))^2$
(i.e., the operator acting on scalar functions) is defined as
$R\psi = (R_1,R_2)$, where
\beas
     R_1(\bx)
     & := &
     -\, \int\limits_{B} \theta(\bolda)\,(x_2 - a_2)
     \int\limits_0^1 t \psi(\bolda + t(\bx - \bolda))\,dt\,d\bolda,
     \\
     R_2(\bx)
     & := &
     \int\limits_{B} \theta(\bolda)\,(x_1 - a_1)
     \int\limits_0^1 t \psi(\bolda + t(\bx - \bolda))\,dt\,d\bolda.
\eeas
The second operator acting on vector fields is defined as follows:
\[
  \ba{l}
  A:\,(C^{\infty}(\bar K))^2 \rightarrow C^{\infty}(\bar K),
  \\
  \displaystyle{
  A\bu(\bx) :=
  \int\limits_{B} \theta(\bolda)\,
  \sum\limits_{i=1}^{2}
  (x_i - a_i) \int\limits_0^1 u_i(\bolda + t(\bx - \bolda))\,dt\,d\bolda,
  }
  \ea
\]
where $\bu = (u_1,u_2)$.

The following properties of the operators $R$ and $A$ are easy to check directly
(see also \cite[Proposition~4.2]{CostabelM_BRP}):

\begin{itemize}

\item[(R1)]
$R$ is a right inverse of the curl operator, i.e.,
\[
  \curl(R\psi) = \psi\qquad \forall\,\psi \in H^r(K),\quad r \ge 0;
\]

\item[(A1)]
if $\bu$ is curl-free, then $A$ is a right inverse of the gradient, i.e.,
\[
  \grad(A\bu) = \bu\qquad
  \forall\,\bu \in \bH^r(\curl 0, K) =
  \{\bu \in \bH^r(K);\; \curl\,\bu = 0\ \hbox{in $K$}\},\quad
  r \ge 0.
\]

\end{itemize}

Furthermore, the operators $R$ and $A$ satisfy the following continuity properties
(see \cite[Corollary~3.4]{CostabelM_BRP}):

\begin{itemize}

\item[(R2)]
the mapping $R$ defines a bounded operator $H^r(K) \rightarrow \bH^{r+1}(K)$
for any $r \ge 0$;

\item[(A2)]
the mapping $A$ defines a bounded operator $\bH^r(K) \rightarrow H^{r+1}(K)$
for any $r \ge 0$.

\end{itemize}

We will use the operators $R$ and $A$ to prove the following auxiliary lemma.

\begin{lemma} \label{lm_Poincare}
Let $\bu \in \bH^r(\curl,K)$, $r > 0$. Then there exist a function $\psi \in H^{r+1}(K)$
and a vector field $\bv \in \bH^{r+1}(K)$ such that
\be \label{P_1}
    \bu = \grad\,\psi + \bv.
\ee
Moreover,
\be \label{P_2}
    \|\bv\|_{\bH^{r+1}(K)} \le C\,\|\curl\,\bu\|_{H^r(K)}\qquad
    \hbox{and}\qquad
    \|\psi\|_{H^{r+1}(K)} \le C\,\|\bu\|_{\bH^r(K)}.
\ee
\end{lemma}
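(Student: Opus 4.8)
The plan is to build $\psi$ and $\bv$ explicitly from the regularized Poincar\'e operators. I put
\[
  \bv := R(\curl\,\bu)\qquad\text{and}\qquad \psi := A\bu .
\]
Since $\bu\in\bH^r(\curl,K)$ we have $\curl\,\bu\in H^r(K)$, so the continuity property (R2) gives $\bv\in\bH^{r+1}(K)$ with $\|\bv\|_{\bH^{r+1}(K)}\le C\,\|\curl\,\bu\|_{H^r(K)}$, which is the first bound in (\ref{P_2}); likewise (A2) applied to $\bu\in\bH^r(K)$ gives $\psi\in H^{r+1}(K)$ and $\|\psi\|_{H^{r+1}(K)}\le C\,\|\bu\|_{\bH^r(K)}$, the second bound in (\ref{P_2}). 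Thus the only thing left to establish is the decomposition (\ref{P_1}) itself, i.e. the identity
\[
  \bu = \grad(A\bu) + R(\curl\,\bu).
\]

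This is the Poincar\'e homotopy formula for the (averaged) operators $A$ and $R$ on the two-dimensional de Rham complex, and it is the natural strengthening of the one-sided inverse statements (R1) and (A1) quoted above -- indeed (R1) is $\curl\circ R=\mathrm{id}$, and (A1) is its restriction to the kernel of $\curl$. I would verify it directly: differentiating $A\bu$ under the integral sign and integrating by parts in the radial variable $t\in(0,1)$ -- exactly the kind of elementary manipulation that proves (R1) and (A1) -- one obtains $\partial_{x_j}(A\bu)=u_j-[R(\curl\,\bu)]_j$ for $j=1,2$, which is the claim; alternatively one cites the corresponding identity in \cite{CostabelM_BRP}. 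Inserting the definitions of $\psi$ and $\bv$ then gives (\ref{P_1}) and finishes the proof.

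If one prefers to use only the properties (R1), (R2), (A1), (A2) exactly as stated, a small variant works: keep $\bv:=R(\curl\,\bu)$, observe that (R1) yields $\curl(\bu-\bv)=0$ while $\bv\in\bH^{r+1}(K)\subset\bH^r(K)$ forces $\bu-\bv\in\bH^r(\curl 0,K)$, and then set $\psi:=A(\bu-\bv)$, so that (A1) gives $\grad\psi=\bu-\bv$, i.e. (\ref{P_1}), and (A2) gives $\psi\in H^{r+1}(K)$ with $\|\psi\|_{H^{r+1}(K)}\le C\big(\|\bu\|_{\bH^r(K)}+\|\curl\,\bu\|_{H^r(K)}\big)$. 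This is only marginally weaker than the stated estimate for $\psi$ (it carries the full graph norm of $\bu$) and already suffices for every later use; recovering the sharp bound $\|\psi\|_{H^{r+1}(K)}\le C\|\bu\|_{\bH^r(K)}$ is precisely where one needs the homotopy identity rather than just the one-sided inverses, so that is the step I would be most careful to justify.
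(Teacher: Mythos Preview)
Your proof is correct. Your primary route---setting $\psi:=A\bu$, $\bv:=R(\curl\,\bu)$ and invoking the Poincar\'e homotopy identity $\grad(A\bu)+R(\curl\,\bu)=\bu$---is legitimate: this identity is precisely the content of the chain-homotopy relation $dR+Rd=\mathrm{id}$ for the regularized operators in \cite{CostabelM_BRP}, and properties (R1), (A1) are its special cases. With it, both estimates in (\ref{P_2}) follow immediately from (R2) and (A2), as you say.

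The paper, however, takes exactly your ``variant'': it sets $\bv:=R(\curl\,\bu)$ and $\psi:=A(\bu-\bv)$, using only (R1), (R2), (A1), (A2). Where your analysis of this variant stops at the graph-norm bound $\|\psi\|_{H^{r+1}(K)}\le C(\|\bu\|_{\bH^r(K)}+\|\curl\,\bu\|_{H^r(K)})$, the paper in fact recovers the sharp bound $\|\psi\|_{H^{r+1}(K)}\le C\|\bu\|_{\bH^r(K)}$ without the homotopy formula: it estimates $\|R(\curl\,\bu)\|_{\bH^r(K)}$ not via $\bH^{r+1}\hookrightarrow\bH^r$ and (R2), but by invoking the continuity of $R$ one index lower together with the mapping property $\curl:\bH^r(K)\to H^{r-1}(K)$ (referenced there as ``specific mapping properties of the scalar curl operator''). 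So your assertion that the homotopy identity is \emph{necessary} for the sharp $\psi$-bound is not quite right; it is merely the cleaner way to obtain it. In the end both arguments yield the lemma as stated, and for the only application in the paper (Theorem~\ref{thm_H(curl)_estimate}) the graph-norm bound would already suffice.
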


\begin{proof}
Since $\curl\,\bu \in H^r(K)$, we use the operator $R$ to define the vector field
$\bv := R(\curl\,\bu) \in \bH^{r+1}(K)$ (see property (R2)). Then
\be \label{P_3}
    \bu = (\bu - R(\curl\,\bu)) + R(\curl\,\bu) =
    (\bu - R(\curl\,\bu)) + \bv.
\ee
The vector field $(\bu - R(\curl\,\bu)) \in \bH^r(K)$ is curl-free due to property (R1).
Therefore, applying the operator $A$ to this vector field and using properties
(A1) and (A2), we find a function $\psi := A(\bu - R(\curl\,\bu)) \in H^{r+1}(K)$
such that $\grad\,\psi = \bu - R(\curl\,\bu)$. Therefore, the decomposition of $\bu$
in (\ref{P_3}) can be written in the form given by (\ref{P_1}).
The inequalities in (\ref{P_2}) are obtained by using the continuity properties
of the operators $R$ and $A$:
\[
  \|\bv\|_{\bH^{r+1}(K)} = \|R(\curl\,\bu)\|_{\bH^{r+1}(K)} \le
  C\,\|\curl\,\bu\|_{H^r(K)}
\]
and
\[
  \|\psi\|_{H^{r+1}(K)} = \|A(\bu - R(\curl\,\bu))\|_{H^{r+1}(K)}
  \le C\,\Big(\|\bu\|_{\bH^r(K)} + \|R(\curl\,\bu)\|_{\bH^{r}(K)}\Big)
  \le C\,\|\bu\|_{\bH^r(K)}.
\]
The last inequality relies on specific mapping properties of the scalar
curl operator (see, e.g.,~\cite{BespalovH_Chp}). This finishes the proof.
\end{proof}

\section{Interpolation operators} \label{sec_int}
\setcounter{equation}{0}

In \cite{DemkowiczB_03_pIE} two projection-based
interpolation operators have been introduced and analyzed. These are
the $H^1$-conforming interpolation operator
$\Pi^{1}_p: H^{1+r}(T) \rightarrow \CP_p(T)$ and
the $\bH(\curl)$-conform\-ing interpolation operator
$\Pi^{\curl}_p:\, \bH^r(T) \cap \bH(\curl,T) \rightarrow \bCP^{\rm Ned}_p(T)$
(here, $r > 0$ in both cases). Let us briefly sketch the definitions of both operators
and summarize their properties (see \cite{DemkowiczB_03_pIE} for details).

Let $g \in H^{1+r}(T)$, $r > 0$. To define the interpolant $\Pi^1_p\, g$, one starts
with the standard linear interpolation of $g$ at the vertices of $T$:
\[
  g_1 \in \CP_1(T),\quad g_1 = g \quad \hbox{at each vertex of $T$}.
\]
Then, for each edge $\ell \subset \partial T$, we define a polynomial
$g_{2,\ell}$ by using the projection
\be \label{int_1}
    g_{2,\ell} \in \CP_p^0(\ell):\quad
    \|(g - g_1)|_{\ell} - g_{2,\ell}\|_{\tH^{1/2}(\ell)} \rightarrow \min.
\ee
Extending $g_{2,\ell}$ by zero onto the remaining part of $\partial T$ (and keeping its
notation), using the polynomial extension $\CE_p$ from the boundary
(see Section~\ref{sec_ext}), and summing up over all edges we define
\be \label{int_2}
    g_2^p := \sum\limits_{\ell \subset \partial T} \CE_p(g_{2,\ell}) \in \CP_p(T).
\ee
Finally, we define the polynomial bubble $g_3^p$ by projection in the $H^1$-semi-norm
\be \label{int_3}
    g_{3}^p \in \CP_p^0(T):\quad
    |(g - g_1 - g_{2}^p) - g_3^p|_{H^1(T)} \rightarrow \min.
\ee
Then the interpolant $\Pi^1_p\, g$ is defined as the sum
\be \label{int_4}
    \Pi^1_p\, g := g_1 + g_2^p +g_3^p \in \CP_p(T).
\ee
Note that, due to the finite dimensionality of $g_1$, there holds for $r > 0$
\be \label{int_5}
    \|g_1\|_{H^{1+r}(T)} \simeq
    \sum\limits_{\nu:\,\hbox{\scriptsize vertices of $T$}} |g(\nu)| \le
    C\,\|g\|_{C(\bar T)} \le
    C\,\|g\|_{H^{1+r}(T)}.
\ee

Now we proceed to the $\bH(\curl)$-conforming interpolation operator.
Given a vector field $\bu \in \bH^r(T) \cap \bH(\curl,T)$ with $r > 0$,
the interpolant $\bu^p = \Pi^{\curl}_p\,\bu \in \bCP^{\rm Ned}_p(T)$
is also defined as the sum of three terms:
\[ 
    \bu^p = \bu_1 + \bu^p_2 + \bu^p_3.
\] 
Here, $\bu_1$ is the Witney (lowest order) interpolant
\[ 
    \bu_1 = \sum\limits_{\ell \subset \partial T}
    \Big(\int\limits_{\ell} \bn\times\bu\,d\sigma\Big)\,\bphi_{\ell},
\] 
where $\bn = (n_1,n_2)$ denotes the outward normal unit vector to $\partial T$,
$\bn\times\bu = n_1 u_2 - n_2 u_1$ with $\bu = (u_1,u_2)$, and
$\bphi_{\ell}$ are the standard basis functions (associated with edges $\ell$)
for $\bCP_1^{\rm Ned}(T)$.

For any edge $\ell \subset \partial T$ one has
\[ 
    \int\limits_{\ell} \bn \times (\bu - \bu_1)\,d\sigma = 0.
\] 
Hence, there exists a scalar function $\psi$, defined on the boundary $\partial T$,
such that
\[ 
    {\partial\psi\over{\partial \sigma}} =  \bn \times (\bu - \bu_1),\quad
    \psi = 0 \ \ \hbox{at all vertices}.
\] 
Then, for each edge $\ell$, the restriction $\psi|_{\ell}$ is projected in the
$\tH^{1/2}(\ell)$-norm onto the set of polynomials $\CP^0_{p+1}(\ell)$
\[ 
    \psi_{2,\ell} \in \CP^0_{p+1}(\ell):\quad
    \|\psi|_{\ell} - \psi_{2,\ell}\|_{\tH^{1/2}(\ell)} \rightarrow \min.
\] 
Extending $\psi_{2,\ell}$ by zero from $\ell$ onto $\partial T$ (and keeping
its notation) and using the polynomial extension $\CE_{p+1}$ from the boundary
we define $\psi_{2,p+1}^{\ell} := \CE_{p+1}(\psi_{2,\ell}) \in \CP_{p+1}(T)$.
Then we set
\[ 
    \bu_2^p = \sum\limits_{\ell\, \subset\, \partial T} \bu^p_{2,\ell} \in \bCP_p^{\rm Ned}(T),\ \ 
    \hbox{where \ } \bu^p_{2,\ell} = \grad\, \psi_{2,p+1}^{\ell}.
\] 
The interior interpolant $\bu^p_3$ is a vector bubble function that solves
the constrained minimization problem
\[
  \ba{l}
  \bu^p_3 \in \bCP^{\rm Ned,0}_p(T):
  \\[7pt]
  \|\curl(\bu - (\bu_1 + \bu_2^p + \bu_3^p))\|_{0,T} \rightarrow \min,
  \\[7pt]
  (\bu - (\bu_1 + \bu_2^p + \bu_3^p),\grad\,\phi)_{0,T} = 0\quad
  \forall \phi \in \CP^{0}_{p+1}(T).
  \ea
\]

These interpolation operators satisfy the following properties.

\begin{itemize}

\item[$1^{\circ}$.]
For $r > 0$ the operators $\Pi_p^{1}:\; \bH^{1+r}(T) \rightarrow H^{1}(T)$
and $\Pi_p^{\curl}:\; \bH^r(T) \cap \bH(\curl,T) \rightarrow \bH(\curl,T)$
are well defined and bounded, with corresponding operator norms independent
of the polynomial degree $p$ (cf. \cite[Propositions~1,~2]{DemkowiczB_03_pIE}).

\item[$2^{\circ}$.]
The operators $\Pi_p^{1}$ and $\Pi_p^{\curl}$ preserve scalar polynomials in
$\CP_p(T)$ and polynomial vector fields in $\bCP^{\rm Ned}_p(T)$, respectively.

\item[$3^{\circ}$.]
For $r > 0$ the following diagram commutes (see Proposition~3 in~\cite{DemkowiczB_03_pIE}):
\be \label{deRham}
    \ba{ccccc}
    H^{1+r}(T)                & \stackrel{\grad}{\longrightarrow}  &
    \bH^r(T) \cap \bH(\curl,T) & \stackrel{\curl}{\longrightarrow} & L^2(T)
    \cr
    \quad\left\downarrow{\Large\strut}\right.\,\Pi^1_{p+1}         &
                                                                   &
    \qquad\left\downarrow{\Large\strut}\right.\,\Pi^{\curl}_p      &
                                                                   &
    \qquad\left\downarrow{\Large\strut}\right.\,\Pi^{0}_{p-1}
    \cr
    \CP_{p+1}(T)          & \stackrel{\grad}{\longrightarrow} &
    \bCP^{\rm Ned}_p(T)   & \stackrel{\curl}{\longrightarrow} & \CP_{p-1}(T),
    \ea
\ee
where $\Pi^0_p:\, L^2(T) \rightarrow \CP_p(T)$ denotes the standard
$L^2$-projection onto the set of polynomials $\CP_p(T)$.

\end{itemize}

\section{Interpolation error estimates} \label{sec_estimate}
\setcounter{equation}{0}

First, we consider the $H^1$-conforming $p$-interpolation operator.
We prove that an optimal estimate can be obtained for the interpolation error
measured in the $H^1$-semi-norm.

\begin{theorem} \label{thm_H1_estimate}
Let $g \in H^{1+r}(T)$, $r >0$. Then there exists a positive constant $C$
independent of $p$ and $g$ such that
\be \label{H1_estimate}
    |g - \Pi_p^1\, g|_{H^1(T)} \le C\,p^{-r}\,\|g\|_{H^{1+r}(T)}.
\ee
\end{theorem}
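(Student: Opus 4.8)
The plan is to bound each of the three pieces in the decomposition $g-\Pi_p^1 g = (g-g_1) - g_2^p - g_3^p$ separately, using the structure of the projection-based construction so that the last (bubble) term is actually controlled \emph{automatically} by the first two. The key observation is that $g_3^p$ is defined as the $H^1$-seminorm projection of $g-g_1-g_2^p$ onto $\CP_p^0(T)$, so for \emph{any} competing bubble $w\in\CP_p^0(T)$ one has
\[
  |g-\Pi_p^1 g|_{H^1(T)} = |(g-g_1-g_2^p) - g_3^p|_{H^1(T)} \le |(g-g_1-g_2^p) - w|_{H^1(T)}.
\]
Thus it suffices to exhibit one good approximation $g_1+g_2^p+w$ to $g$ in the $H^1$-seminorm. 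The natural choice is to build, from the two-dimensional operator $\pi_p$ of Lemma~\ref{lm_2D_p-approx}, a polynomial $\tilde g_p := \pi_{p}(g)$ (or $\pi_{p-1}$; the exact index is cosmetic) with $|g-\tilde g_p|_{H^1(T)} \le C p^{-r}\|g\|_{H^{1+r}(T)}$, and then to \emph{correct} $\tilde g_p$ on the skeleton so that its vertex values and edge polynomials match those of $g_1$ and $g_2^p$; the correction lies in $\CP_p^0(T)$ and can serve as $w$.

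First I would handle the edge term. By the trace theorem $(g-g_1)|_\ell \in H^{r+1/2}(\ell) \hookrightarrow H^{1/2}(\ell)$ is a function in $\tilde H^{1/2}$ vanishing at the endpoints of $\ell$ (since $g_1$ interpolates $g$ at the vertices), with norm bounded by $C\|g\|_{H^{1+r}(T)}$; note $r+1/2>1/2$, so Lemma~\ref{lm_1D_p-approx} applies on each edge to $(g-g_1)|_\ell$, giving a polynomial approximation in $\CP_p(\ell)$ — in fact in $\CP_p^0(\ell)$ after adjusting endpoints — with error $\le C p^{-r}\|g\|_{H^{1+r}(T)}$ in $\tilde H^{1/2}(\ell)$. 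Since $g_{2,\ell}$ is defined as the \emph{minimizer} of exactly this $\tilde H^{1/2}(\ell)$-distance over $\CP_p^0(\ell)$, we get
\[
  \|(g-g_1)|_\ell - g_{2,\ell}\|_{\tilde H^{1/2}(\ell)} \le C\,p^{-r}\,\|g\|_{H^{1+r}(T)}
\]
for every edge $\ell$. Summing over the (finitely many) edges and using the norm on $H^{1/2}(\partial T)$ together with the stability estimate \eqref{ext_2} for the discrete harmonic extension $\CE_p$, we conclude that $\|g - g_1 - g_2^p\|_{H^{1/2}(\partial T)}$ — and more relevantly the trace of $g-g_1-g_2^p$ on $\partial T$ — is controlled; in particular $g-g_1-g_2^p$ restricted to $\partial T$ agrees in $\tilde H^{1/2}(\ell)$ to order $p^{-r}$ with the trace of $g-g_1$ reduced by its best edge polynomial.

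The main work, and the step I expect to be the principal obstacle, is combining the interior approximation with the skeleton matching to produce the competitor $w$. Concretely, set $e := g - g_1 - g_2^p$; its trace on $\partial T$ is a piecewise polynomial of degree $p$ on each edge, small in $\tilde H^{1/2}(\ell)$. I would take $\pi_p e \in \CP_p(T)$ from Lemma~\ref{lm_2D_p-approx} with $|e - \pi_p e|_{H^1(T)} \le C p^{-(r)} \|e\|_{H^{1+r}(T)}$ (after checking $\|e\|_{H^{1+r}(T)} \le C\|g\|_{H^{1+r}(T)}$, which follows from \eqref{int_5}, the trace/extension bounds, and the $p$-independent stability of the edge projections in $\tilde H^{1/2}$). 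But $\pi_p e$ is not a bubble: its trace $(\pi_p e)|_{\partial T}$ is some degree-$p$ piecewise polynomial, not zero, though it approximates the trace of $e$. The trace of $e-\pi_p e$ on $\partial T$ is therefore a piecewise polynomial of degree $p$ on each edge, controlled in $H^{1/2}(\partial T)$ (by the trace theorem) by $|e-\pi_p e|_{H^1(T)}+\|e-\pi_p e\|_{0,T}\le C p^{-r}\|g\|_{H^{1+r}(T)}$ after an Aubin–Nitsche-type or direct $L^2$ bound; note the trace need not vanish at vertices, so I first subtract the piecewise-linear interpolant of that trace at the three vertices — call it $\lambda_1\in\CP_1(T)$ via $\CE_1$, which by Remark~\ref{rem_ext}(ii) is also $\CE_p$ of its boundary data — whose $H^1$-seminorm is bounded by the vertex values of $e-\pi_p e$, hence by $C\|e-\pi_p e\|_{C(\bar T)}\le C\|e-\pi_p e\|_{H^{1+r}(T)}$ (here one needs $\pi_p e$ to also approximate $e$ in $H^{1+r}$, available from Lemma~\ref{lm_2D_p-approx} with $s$ slightly above $1$ if $r\ge\eps$, or one argues more carefully for small $r$). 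After this subtraction the remaining boundary data vanishes at vertices, so applying $\CE_p$ to it (stability \eqref{ext_2}) produces a polynomial $\mu\in\CP_p(T)$, and finally $w := \pi_p e - \lambda_1 - \mu$ lies in $\CP_p^0(T)$ by construction and satisfies
\[
  |e - w|_{H^1(T)} \le |e-\pi_p e|_{H^1(T)} + |\lambda_1|_{H^1(T)} + |\mu|_{H^1(T)} \le C\,p^{-r}\,\|g\|_{H^{1+r}(T)}.
\]
Plugging $w$ into the minimizing property of $g_3^p$ then yields \eqref{H1_estimate}. The delicate point throughout is keeping every constant independent of $p$: this is guaranteed because the extension $\CE_p$ is $p$-stable (\eqref{ext_2} and Remark~\ref{rem_ext}), the edge minimizers only improve on the explicit Lemma~\ref{lm_1D_p-approx} bound, $\pi_p$ has $p$-explicit rates from Lemma~\ref{lm_2D_p-approx}, and all sums are over the fixed set of three edges and three vertices; the only genuine care needed is the low-regularity regime $r$ close to $0$, where one should take the interpolation index $s$ in Lemmas~\ref{lm_1D_p-approx}–\ref{lm_2D_p-approx} strictly between $\tfrac12$ (resp.\ $1$) and $\min\{1,r\}$ (resp.\ $1+r$) and interpolate, exactly as in the proof of Lemma~\ref{lm_1D_p-approx}.
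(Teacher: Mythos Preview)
Your strategy --- use the minimizing property of $g_3^p$ and construct a competitor bubble by taking $\pi_p$ of the data and then correcting on the skeleton via the stable extension $\CE_p$ --- is exactly what the paper does. The paper's competitor is simply
\[
  \varphi_p := \pi_p g - \CE_p\big(\g_{\rm tr}(\pi_p g)\big) \in \CP_p^0(T),
\]
after which $|e-\varphi_p|_{H^1(T)} \le |g-\pi_p g|_{H^1(T)} + \big|\CE_p\big(\g_{\rm tr}(g_1)+\sum_\ell g_{2,\ell}-\g_{\rm tr}(\pi_p g)\big)\big|_{H^1(T)}$, and both terms are handled exactly as you outline (Lemma~\ref{lm_2D_p-approx} for the first, \eqref{ext_2}, the trace theorem, localization to edges and Lemma~\ref{lm_1D_p-approx} for the second).

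Your concrete execution, however, has a genuine gap and an unnecessary detour. The gap is the claim that $\|e\|_{H^{1+r}(T)} \le C\|g\|_{H^{1+r}(T)}$ with $C$ independent of $p$. This does \emph{not} follow from the listed ingredients: the extension $\CE_p$ is only known to be stable from $H^{1/2}(\partial T)$ into $H^1(T)$ (see \eqref{ext_2}), not into $H^{1+r}(T)$, so you have no $p$-uniform control of $\|g_2^p\|_{H^{1+r}(T)}$. The remedy is the polynomial-preservation of $\pi_p$ (stated in Lemma~\ref{lm_2D_p-approx}): since $g_1,\,g_2^p \in \CP_p(T)$ one has $\pi_p e = \pi_p g - g_1 - g_2^p$, hence $e-\pi_p e = g-\pi_p g$, and the needed bound $|e-\pi_p e|_{H^1(T)} \le C p^{-r}\|g\|_{H^{1+r}(T)}$ follows directly without ever touching $\|e\|_{H^{1+r}(T)}$.

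The detour is the vertex-correcting linear function $\lambda_1$. First, a slip: the trace of $e-\pi_p e = g-\pi_p g$ on $\partial T$ is \emph{not} piecewise polynomial (only the trace of $\pi_p e$ is). Second, bounding $|\lambda_1|_{H^1(T)}$ via $\|e-\pi_p e\|_{C(\bar T)}\le C\|e-\pi_p e\|_{H^{1+\eps}(T)}$ costs you a factor $p^{\eps}$ and spoils optimality. The paper avoids this entirely: once you know $\pi_p e = \pi_p g - g_1 - g_2^p$, your corrected bubble $w = \pi_p e - \CE_p\big((\pi_p e)|_{\partial T}\big)$ is literally $\varphi_p$ above (use $g_1=\CE_p(\g_{\rm tr} g_1)$ from Remark~\ref{rem_ext}(ii) and $g_2^p=\CE_p(\sum_\ell g_{2,\ell})$), and no separate vertex adjustment is needed.
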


\begin{proof}
Let $g \in H^{1+r}(T)$ with $r >0$.
Using the operator $\pi_p$ (see Lemma~\ref{lm_2D_p-approx}) and the polynomial extension
$\CE_p$ from the boundary, we define the following polynomial bubble function on $T$:
\[
  \varphi_p := \pi_p g - \CE_p (\g_{\rm tr}(\pi_p g)).
\]
Hereafter, $\g_{\rm tr}$ denotes the standard trace operator,
$\g_{\rm tr} f = f|_{\partial T}$. Then, making use of the definition of the interpolant
$\Pi_p^1\, g$ (see (\ref{int_2})--(\ref{int_4})), we have
\bea
    |g - \Pi_p^1\, g|_{H^1(T)}
    & \le &
    |(g - g_1 - g_2^p) - g_3^p|_{H^1(T)} \le
    |(g - g_1 - g_2^p) - \varphi_p|_{H^1(T)}
    \nonumber
    \\[5pt]
    & \le &
    |g - \pi_p g|_{H^1(T)} +
    \Big|g_1 + g_2^p - \CE_p (\g_{\rm tr}(\pi_p g))\Big|_{H^1(T)}
    \nonumber
    \\[5pt]
    & \le &
    \|g - \pi_p g\|_{H^1(T)} +
    \bigg|\CE_p \Big(\g_{\rm tr}(g_1) +
                     \sum\limits_{\ell \subset \partial T} g_{2,\ell} -
                     \g_{\rm tr}(\pi_p g)
                \Big)
    \bigg|_{H^1(T)}.
    \label{H1_proof_1}
\eea
Here we also used the fact that
$g_1 = \CE_1(\g_{\rm tr}(g_1)) = \CE_p(\g_{\rm tr}(g_1))$
for any $p \ge 1$ (see Remark~\ref{rem_ext} (ii)).
The extension operator $\CE_p$ satisfies (\ref{ext_2}). Therefore,
applying the continuity property of the trace
operator $\g_{\rm tr}:\;H^1(T) \rightarrow H^{1/2}(\partial T)$, we find
\beas
     \lefteqn{
              \bigg|\CE_p \Big(\g_{\rm tr}(g_1) +
                               \sum\limits_{\ell \subset \partial T} g_{2,\ell} -
                               \g_{\rm tr}(\pi_p g)
                          \Big)
              \bigg|_{H^1(T)}\qquad\qquad
             }
     \\[3pt]
     & &
     \qquad\qquad\qquad\le
     \Big\|
     \g_{\rm tr}(g_1) + \sum\limits_{\ell \subset \partial T} g_{2,\ell} - \g_{\rm tr}(\pi_p g)
     \Big\|_{H^{1/2}(\partial T)}
     \\
     & &
     \qquad\qquad\qquad\le
     C\,\bigg(
              \Big\|
              \g_{\rm tr}(g_1) + \sum\limits_{\ell \subset \partial T} g_{2,\ell} - \g_{\rm tr}(g)
              \Big\|_{H^{1/2}(\partial T)}
              +
              \Big\|
              \g_{\rm tr}(g - \pi_p g)
              \Big\|_{H^{1/2}(\partial T)}
     \bigg)
     \\
     & &
     \qquad\qquad\qquad\le
     C\,\bigg(
              \Big\|
              \sum\limits_{\ell \subset \partial T} g_{2,\ell} - \g_{\rm tr}(g - g_1)
              \Big\|_{H^{1/2}(\partial T)}
              +
              \|g - \pi_p g\|_{H^{1}(T)}
     \bigg).
\eeas
Using this estimate in (\ref{H1_proof_1}) we obtain
\be \label{H1_proof_2}
    |g - \Pi_p^1\, g|_{H^1(T)} \le
    C\,\|g - \pi_p g\|_{H^{1}(T)} +
    C\,\Big\|
       \sum\limits_{\ell \subset \partial T} g_{2,\ell} - \g_{\rm tr}(g - g_1)
       \Big\|_{H^{1/2}(\partial T)}.
\ee
The first term on the right-hand side of (\ref{H1_proof_2}) is estimated by applying
Lemma~\ref{lm_2D_p-approx}:
\be \label{H1_proof_3}
    \|g - \pi_p g\|_{H^{1}(T)} \le C\,p^{-r}\,\|g\|_{H^{1+r}(T)}.
\ee
To estimate the second term on the right-hand side of (\ref{H1_proof_2})
we use localization to the edges of the triangle, the definition of the edge
interpolants $g_{2,\ell}$ (see (\ref{int_1})), and the $p$-approximation result
in $\tH^{1/2}(\ell)$ on each edge (see Lemma~\ref{lm_1D_p-approx}).
One has
\beas
     \Big\|
     \sum\limits_{\ell \subset \partial T} g_{2,\ell} - \g_{\rm tr}(g - g_1)
     \Big\|_{H^{1/2}(\partial T)}
     & \le &
     C\,\sum\limits_{\ell \subset \partial T}
     \|g_{2,\ell} - (g - g_1)|_{\ell}\|_{\tH^{1/2}(\ell)}
     \\[5pt]
     & \le &
     C\,p^{-r}\,\sum\limits_{\ell \subset \partial T}
     \|(g - g_1)|_{\ell}\|_{H^{1/2+r}(\ell)}.
\eeas
Hence, applying the trace theorem for individual
edges (see \cite[Theorem~1.4.2]{Grisvard_92_SBV}) and estimating the norm
of $g_1$ as in (\ref{int_5}), we prove
\be \label{H1_proof_4}
   \Big\|
   \sum\limits_{\ell \subset \partial T} g_{2,\ell} - \g_{\rm tr}(g - g_1)
   \Big\|_{H^{1/2}(\partial T)} \le
   C\,p^{-r}\,\|g - g_1\|_{H^{1+r}(T)} \le
   C\,p^{-r}\,\|g\|_{H^{1+r}(T)}.
\ee
Now the desired error bound in (\ref{H1_estimate}) immediately
follows from (\ref{H1_proof_2})--(\ref{H1_proof_4}).
\end{proof}

In the following theorem we prove an optimal error estimate for
the $\bH(\curl)$-conforming $p$-interpolation operator $\Pi_p^{\curl}$.

\begin{theorem} \label{thm_H(curl)_estimate}
Let $\bu \in \bH^r(\curl, T)$, $r > 0$. Then there exists a positive constant $C$
independent of $p$ and $\bu$ such that
\be \label{H(curl)_estimate}
    \|\bu - \Pi_p^{\curl}\,\bu\|_{\bH(\curl,T)} \le
    C\, p^{-r}\, \|\bu\|_{\bH^r(\curl,T)}.
\ee
\end{theorem}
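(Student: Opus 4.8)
The plan is to use the regular splitting of Lemma~\ref{lm_Poincare},
writing $\bu = \grad\,\psi + \bv$ with $\psi \in H^{r+1}(T)$ and
$\bv \in \bH^{r+1}(T)$, together with the de Rham commuting diagram
property~$3^{\circ}$. First I would treat the curl-free part. Since
$\curl(\bu - \Pi_p^{\curl}\bu) = \curl\,\bu - \Pi_{p-1}^0(\curl\,\bu)$
by the commuting diagram~(\ref{deRham}), the error in $\curl$ is just the
$L^2$-projection error of $\curl\,\bu \in H^r(T)$, so Lemma~\ref{lm_2D_p-approx}
(applied with $s=0$, $k=r$) immediately gives
$\|\curl(\bu - \Pi_p^{\curl}\bu)\|_{0,T} \le C\,p^{-r}\,\|\curl\,\bu\|_{H^r(T)}$.
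This disposes of the $\curl$-component of the graph norm, so it remains
to bound $\|\bu - \Pi_p^{\curl}\bu\|_{0,T}$.

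Next I would split the $L^2$-error using the decomposition. For the
gradient part, the commuting diagram gives
$\Pi_p^{\curl}(\grad\,\psi) = \grad(\Pi_{p+1}^1\psi)$, so
$\|\grad\,\psi - \Pi_p^{\curl}(\grad\,\psi)\|_{0,T}
 = |\psi - \Pi_{p+1}^1\psi|_{H^1(T)}
 \le C\,(p+1)^{-r}\,\|\psi\|_{H^{1+r}(T)}
 \le C\,p^{-r}\,\|\bu\|_{\bH^r(T)}$
by Theorem~\ref{thm_H1_estimate} and the bound~(\ref{P_2}) on $\psi$.
For the smooth remainder $\bv$, I would use boundedness of $\Pi_p^{\curl}$
on $\bH(\curl,T)$ (property~$1^{\circ}$) — but that requires an argument,
since $\bv$ need not be curl-free, and $\Pi_p^{\curl}$ is only defined on
$\bH^s(T)\cap\bH(\curl,T)$ with the norm bound involving $\bH^s(T)$, $s>0$.
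The clean route is to compare $\Pi_p^{\curl}\bv$ with a fixed polynomial
approximation: let $\bv_p := (\pi_p v_1, \pi_p v_2) \in \bCP_p^{\rm Ned}(T)$
from Lemma~\ref{lm_2D_p-approx}. Since $\Pi_p^{\curl}$ preserves
$\bCP_p^{\rm Ned}(T)$ (property~$2^{\circ}$),
$\bv - \Pi_p^{\curl}\bv = (\bv - \bv_p) - \Pi_p^{\curl}(\bv - \bv_p)$, and
then
$\|\bv - \Pi_p^{\curl}\bv\|_{0,T}
 \le \|\bv - \bv_p\|_{0,T} + \|\Pi_p^{\curl}(\bv - \bv_p)\|_{0,T}
 \le (1 + C)\,\|\bv - \bv_p\|_{\bH^s(T)\cap\bH(\curl,T)}$
for any fixed $s \in (0, \min\{1,r\})$, using property~$1^{\circ}$.
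Lemma~\ref{lm_2D_p-approx} bounds $\|\bv - \bv_p\|_{\bH^s(T)}$ by
$C\,p^{-(r+1-s)}\,\|\bv\|_{\bH^{r+1}(T)}$; for the $\curl$ part I would note
$\curl(\bv - \bv_p) = \curl\,\bv - (\pi_p(\partial_1 v_2) - \pi_p(\partial_2 v_1))$,
and since each $\partial_i v_j \in H^r(T)$ the $L^2$-error is
$C\,p^{-r}\,\|\bv\|_{\bH^{r+1}(T)}$ — here one uses that $\pi_p$ commutes
well enough, or simply that $\|\partial_i v_j - \pi_p(\partial_i v_j)\|_{0,T}
\le C\,p^{-r}\|\partial_i v_j\|_{H^r(T)}$. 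Either way we get
$\|\bv - \Pi_p^{\curl}\bv\|_{0,T} \le C\,p^{-r}\,\|\bv\|_{\bH^{r+1}(T)}
\le C\,p^{-r}\,\|\curl\,\bu\|_{H^r(T)}$ via~(\ref{P_2}).

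Collecting the three contributions — the $\curl$-error, the gradient part,
and the remainder — and using~(\ref{P_2}) gives
\[
  \|\bu - \Pi_p^{\curl}\bu\|_{\bH(\curl,T)}
  \le C\,p^{-r}\,\big(\|\bu\|_{\bH^r(T)} + \|\curl\,\bu\|_{H^r(T)}\big)
  \le C\,p^{-r}\,\|\bu\|_{\bH^r(\curl,T)},
\]
which is~(\ref{H(curl)_estimate}). The main obstacle I anticipate is the
treatment of the smooth remainder $\bv$: one must be careful that the
stability constant for $\Pi_p^{\curl}$ on $\bH^s(T)\cap\bH(\curl,T)$ is
$p$-independent (property~$1^{\circ}$) and that it is applied to the
\emph{difference} $\bv - \bv_p$ rather than to $\bv$ directly — otherwise
the gain of $p^{-r}$ is lost. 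A secondary technical point is confirming
that $\curl\,\bu$ controls $\|\bv\|_{\bH^{r+1}(T)}$ with a $p$-independent
constant, which is exactly what~(\ref{P_2}) provides; the slightly subtle
mapping properties of the scalar $\curl$ invoked at the end of
Lemma~\ref{lm_Poincare}'s proof are reused here.
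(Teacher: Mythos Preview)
Your approach is essentially the same as the paper's: the regular splitting
$\bu = \grad\psi + \bv$ from Lemma~\ref{lm_Poincare}, the commuting diagram
for the gradient part combined with Theorem~\ref{thm_H1_estimate}, and the
boundedness/polynomial-preservation argument for the smooth remainder $\bv$.
Two small points are worth noting.

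First, your formula
$\curl(\bv - \bv_p) = \curl\,\bv - \big(\pi_p(\partial_1 v_2) - \pi_p(\partial_2 v_1)\big)$
is not correct: $\curl\,\bv_p = \partial_1(\pi_p v_2) - \partial_2(\pi_p v_1)$, and
$\pi_p$ does not commute with differentiation, so your fallback estimate
$\|\partial_i v_j - \pi_p(\partial_i v_j)\|_{0,T} \le C\,p^{-r}\|\partial_i v_j\|_{H^r(T)}$
bounds the wrong quantity. The clean fix (which is what the paper does) is simply
$\|\curl(\bv - \bv_p)\|_{0,T} \le C\,\|\bv - \bv_p\|_{\bH^1(T)}$ and then apply
Lemma~\ref{lm_2D_p-approx} componentwise with $s=1$, $k=r+1$; this also absorbs the
$\bH^\eps$-term and makes the argument a one-liner.

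Second, your separate treatment of $\curl(\bu - \Pi_p^{\curl}\bu)$ via the
$\Pi_{p-1}^0$-column of the diagram is correct but redundant: once you estimate
$\|\bv - \Pi_p^{\curl}\bv\|_{\bH(\curl,T)}$ (as you must, since property~$1^{\circ}$
forces the $\bH(\curl)$-norm on the right), the curl contribution is already
covered, and the gradient part is curl-free. The paper therefore omits that step.
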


\begin{proof}
Given $\bu \in \bH^r(\curl, T)$ ($r > 0$), we use Lemma~\ref{lm_Poincare}
to decompose $\bu$ as follows:
\be \label{H(curl)_proof_1}
    \bu = \grad\,\psi + \bv,\qquad
    \psi \in H^{r+1}(T),\ \ \bv \in \bH^{r+1}(T).
\ee
Moreover (see (\ref{P_2})),
\be \label{H(curl)_proof_1_1}
    \|\bv\|_{\bH^{r+1}(T)} \le C\,\|\curl\,\bu\|_{H^r(T)}\qquad
    \|\psi\|_{H^{r+1}(T)} \le C\,\|\bu\|_{\bH^r(T)}.
\ee
Then, applying the interpolation operator $\Pi_p^{\curl}$ and using its commutativity
with $\Pi^1_{p+1}$ (see (\ref{deRham})), we write
\be \label{H(curl)_proof_2}
    \Pi_p^{\curl}\,\bu =
    \Pi_p^{\curl}(\grad\,\psi) + \Pi_p^{\curl}\,\bv =
    \grad(\Pi_{p+1}^1 \psi) + \Pi_p^{\curl}\,\bv.
\ee
Since $\Pi_p^{\curl}$ is a bounded operator preserving polynomials
(see properties~$1^{\circ}$,~$2^{\circ}$ of $\Pi_p^{\curl}$),
one has for some fixed $\eps \in (0,1)$
and for any polynomial $\bv_p \in (\CP_p(T))^2$:
\beas
     \|\bv - \Pi_p^{\curl}\,\bv\|_{\bH(\curl,T)}
     & = &
     \|\bv - \bv_p - \Pi_p^{\curl}(\bv - \bv_p)\|_{\bH(\curl,T)}
     \\[5pt]
     & \le &
     C\, \inf_{\bv_p \in (\CP_p(T))^2}
     \Big(
          \|\bv - \bv_p\|_{\bH^\eps(T)} + \|\curl(\bv - \bv_p)\|_{L^2(T)}
     \Big)
     \\[5pt]
     & \le &
     C\, \inf_{\bv_p \in (\CP_p(T))^2} \|\bv - \bv_p\|_{\bH^1(T)}.
\eeas
Hence, applying Lemma~\ref{lm_2D_p-approx} componentwise and using the first
inequality in (\ref{H(curl)_proof_1_1}), we estimate
\be \label{H(curl)_proof_3}
    \|\bv - \Pi_p^{\curl}\,\bv\|_{\bH(\curl,T)} \le
    C\,p^{-r}\,\|\bv\|_{\bH^{1+r}(T)} \le
    C\,p^{-r}\,\|\curl\,\bu\|_{H^{r}(T)}.
\ee
On the other hand, applying Theorem~\ref{thm_H1_estimate} and the second inequality
in (\ref{H(curl)_proof_1_1}) we obtain
\be \label{H(curl)_proof_4}
    \|\grad\,\psi - \grad(\Pi_{p+1}^{1}\psi)\|_{\bH(\curl,T)} =
    |\psi - \Pi_{p+1}^{1}\psi|_{H^1(T)} \le
    C\,p^{-r}\,\|\psi\|_{H^{1+r}(T)} \le
    C\,p^{-r}\,\|\bu\|_{\bH^{r}(T)}.
\ee
Combining (\ref{H(curl)_proof_3}) and (\ref{H(curl)_proof_4}) we prove
(\ref{H(curl)_estimate}) by making use of decompositions (\ref{H(curl)_proof_1}),
(\ref{H(curl)_proof_2}) and the triangle inequality.
\end{proof}

\section{Concluding remarks} \label{sec_remarks}
\setcounter{equation}{0}

The main result of the paper -- an optimal error estimate for 
the $\bH(\curl)$-conforming projection based $p$-interpolation operator
$\Pi_p^{\curl}$ introduced by Demkowicz and Babu{\v s}ka in~\cite{DemkowiczB_03_pIE} --
is proved for the second N{\' e}d{\' e}lec family of edge elements on the
reference triangle $T$ (see Theorem~\ref{thm_H(curl)_estimate}).
Below we discuss some simple extensions of this result.

{\em The first N{\' e}d{\' e}lec family of edge elements and the
     case of the reference square.}
Using appropriate polynomial spaces and the same constructions as in Section~\ref{sec_int},
one can define the $\bH(\curl)$-conforming $p$-interpolation operator
for the first N{\' e}d{\' e}lec family of edge elements introduced
in \cite{Nedelec_80_MFE}, and also for both
the first and the second families on the reference square $Q = (-1,1)^2$.
Moreover, properties $1^{\circ}$--$\;3^{\circ}$ of the operator $\Pi_p^{\curl}$
formulated in Section~\ref{sec_int} will remain valid in all these cases
(see \cite[Section~6]{DemkowiczB_03_pIE}). Thus, the proof of
Theorem~\ref{thm_H(curl)_estimate} carries over without modifications
to all the cases mentioned here.

{\em $H(\div)$-conforming $p$-interpolation operator.}
As mentioned in the introduction, the $\bH(\curl)$-conforming $p$-interpolation
operator is critical for the convergence and error analysis of the high-order
FEM with edge elements for Maxwell's equations in two dimensions.
However, when a boundary integral formulation of Maxwell's equations
(e.g., the electric field integral equation) is discretized by the
high-order boundary element methods, similar interpolation operators
are required in the $H(\div)$-conforming setting for RT- or BDM- surface
elements (see \cite{BespalovH_09_NpB,BespalovH_Chp}).
For the sake of completeness we will formulate here the main results related
to the operator $\Pi^{\curl}_p$ in the $\bH(\div)$-setting.
This can be easily done by rotation due to the isomorphism of the
curl and the $\div$ operators in 2D (and, as a consequence, the isomorphism
of the first (resp., second) N{\' e}d{\' e}lec family of edge elements and
the RT- (resp., BDM-) elements).

Let $K$ be either the equilateral reference triangle $T$ or the reference square $Q$.
We will focus the presentation on RT-elements only (in the case of BDM-elements
all formulations below are essentially the same).
Let $\bCP^{\rm RT}_p(K)$ be the RT-space of order $p\ge 1$ on
the reference element $K$ (see, e.g., \cite{BrezziF_91_MHF}), i.e.,
\[
  \bCP^{\rm RT}_p(K) =
  (\CP_{p-1}(K))^2 \oplus 
                         \Big(
                               \ba{c}
                               x_1 \\
                               x_2
                               \ea
                         \Big)
  \CP_{p-1}(K).
\]
Then the $\bH(\div)$-conforming $p$-interpolation operator 
$\Pi^{\div}_p:\, \bH^r(K) \cap \bH(\div,K) \rightarrow \bCP^{\rm RT}_p(K)$
($r > 0$) can be defined as the ``rotated'' $\bH(\curl)$-conforming interpolation
in the same way as in Section~\ref{sec_int}. It satisfies the following properties:

\begin{itemize}

\item[$1^{\circ}$.]
For $r > 0$ the operator
$\Pi_p^{\div}:\; \bH^r(K) \cap \bH(\div,K) \rightarrow \bH(\div,K)$
is bounded with corresponding operator norm being independent
of the polynomial degree $p$.

\item[$2^{\circ}$.]
The operator $\Pi_p^{\div}$ preserves polynomial vector fields in $\bCP^{\rm RT}_p(K)$.

\item[$3^{\circ}$.]
For $r > 0$ the following diagram commutes (cf. (\ref{deRham})):
\[
    \ba{ccccc}
    H^{1+r}(K)                & \stackrel{\bcurl}{\longrightarrow} &
    \bH^r(K) \cap \bH(\div,K) & \stackrel{\div}{\longrightarrow}   & L^2(K)
    \cr
    \quad\left\downarrow{\Large\strut}\right.\,\Pi^1_p         &
                                                               &
    \qquad\left\downarrow{\Large\strut}\right.\,\Pi^{\div}_p &
                                                               &
    \qquad\left\downarrow{\Large\strut}\right.\,\Pi^{0}_{p-1}
    \cr
    \CP_{p}(K)          & \stackrel{\bcurl}{\longrightarrow} &
    \bCP^{\rm RT}_p(K)  & \stackrel{\div}{\longrightarrow}   & \CP_{p-1}(K).
    \ea
\]
where $\bcurl = (\partial/\partial x_2,\,-\partial/\partial x_1)$.

\end{itemize}

The following theorem provides an optimal estimate for the interpolation error.

\begin{theorem} \label{thm_H(div)_estimate}
Let $\bu \in \bH^r(\div, K)$, $r > 0$. Then there exists a positive constant $C$
independent of $p$ and $\bu$ such that
\[
  \|\bu - \Pi_p^{\div}\,\bu\|_{\bH(\div,K)} \le
  C\, p^{-r}\, \|\bu\|_{\bH^r(\div,K)}.
\]
\end{theorem}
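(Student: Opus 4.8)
The plan is to reduce Theorem~\ref{thm_H(div)_estimate} to Theorem~\ref{thm_H(curl)_estimate} (together with the already-established extension of that result to the reference square and to both N\'ed\'elec families, see the first paragraph of this section) via the standard $90^\circ$ rotation that intertwines the $\curl$ and $\div$ operators in two dimensions. Concretely, let $\mathbf{Rot}$ denote the pointwise rotation by $\pi/2$, acting on a vector field by $(\mathbf{Rot}\,\bu)(\bx) = (-u_2(\bx),\,u_1(\bx))$ for $\bu = (u_1,u_2)$. First I would record the three elementary identities on which everything hinges: $\div(\mathbf{Rot}\,\bu) = \curl\,\bu$ (equivalently $\curl(\mathbf{Rot}^{-1}\bv) = \div\,\bv$); the rotation maps $\bCP^{\rm Ned}_p(K)$ onto $\bCP^{\rm RT}_p(K)$ (and similarly the second N\'ed\'elec family onto the BDM space); and, since $\mathbf{Rot}$ is an orthogonal transformation applied componentwise, it is an isometry on every Sobolev space $\bH^s(K)$, so that $\|\mathbf{Rot}\,\bu\|_{\bH^r(\div,K)} = \|\bu\|_{\bH^r(\curl,K)}$ with the graph norms. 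These are immediate from the definitions and require no real computation.

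Next I would make precise what ``the rotated $\bH(\curl)$-conforming interpolation'' means as an operator identity. The operator $\Pi^{\div}_p$ is defined exactly as $\Pi^{\curl}_p$ in Section~\ref{sec_int} but with every occurrence of $\curl$, $\bn\times(\cdot)$, $\bphi_\ell$, $\bCP^{\rm Ned}$, $\grad$ replaced by its rotated counterpart $\div$, $\bn\cdot(\cdot)$, $\mathbf{Rot}\,\bphi_\ell$, $\bCP^{\rm RT}$, $\bcurl$. Since rotation commutes with all the minimization problems (norms are preserved) and with the polynomial extension operators (which act on scalar traces, untouched by the rotation), one obtains the conjugation formula
\be \label{div_rot}
   \Pi^{\div}_p = \mathbf{Rot}\circ \Pi^{\curl}_p \circ \mathbf{Rot}^{-1}
\ee
as operators on $\bH^r(K)\cap\bH(\div,K)$, $r>0$. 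Here $\Pi^{\curl}_p$ is understood in the sense appropriate to the element under consideration (triangle or square, first or second N\'ed\'elec family), and properties $1^\circ$--$3^\circ$ for $\Pi^{\div}_p$ follow from the corresponding properties of $\Pi^{\curl}_p$ by the same conjugation; in particular the commuting diagram for $\Pi^{\div}_p$ is just the diagram~(\ref{deRham}) with every vertical and horizontal arrow conjugated by $\mathbf{Rot}$.

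Finally, the error estimate is immediate: given $\bu\in\bH^r(\div,K)$ with $r>0$, set $\bw := \mathbf{Rot}^{-1}\bu\in\bH^r(\curl,K)$. Using the isometry property, the conjugation formula~(\ref{div_rot}), and Theorem~\ref{thm_H(curl)_estimate} (in the form valid for the relevant element, as noted in the first paragraph of this section),
\beas
   \|\bu - \Pi^{\div}_p\,\bu\|_{\bH(\div,K)}
   &=& \|\mathbf{Rot}(\bw - \Pi^{\curl}_p\,\bw)\|_{\bH(\div,K)}
    = \|\bw - \Pi^{\curl}_p\,\bw\|_{\bH(\curl,K)}
   \\[3pt]
   &\le& C\,p^{-r}\,\|\bw\|_{\bH^r(\curl,K)}
    = C\,p^{-r}\,\|\bu\|_{\bH^r(\div,K)}.
\eeas
I do not expect any genuine obstacle here: the only thing that needs a small amount of care is verifying that the rotation really does commute with each of the three building blocks of the interpolation operator (vertex/Whitney term, edge projection plus extension, interior constrained minimization) so that~(\ref{div_rot}) holds on the nose rather than merely up to lower-order terms — and this is a routine check because rotation is a fixed orthogonal map that leaves scalar traces and all the relevant norms invariant. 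One should also state explicitly that for BDM-surface elements the same argument applies verbatim with $\bCP^{\rm RT}_p$ replaced by the BDM space and the second N\'ed\'elec family in place of the first.
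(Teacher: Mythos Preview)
Your proposal is correct and follows exactly the approach the paper indicates: the paper does not give a separate proof of Theorem~\ref{thm_H(div)_estimate} but simply states that the $\bH(\div)$-result is obtained from Theorem~\ref{thm_H(curl)_estimate} ``by rotation due to the isomorphism of the curl and the $\div$ operators in 2D,'' which is precisely your conjugation argument via $\mathbf{Rot}$. The only (harmless) slip is a sign: with your convention $\mathbf{Rot}\,\bu=(-u_2,u_1)$ one gets $\div(\mathbf{Rot}\,\bu)=-\curl\,\bu$, so either use $\mathbf{Rot}\,\bu=(u_2,-u_1)$ or carry the sign through---it does not affect any norm or the conjugation formula.
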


{\em $hp$-estimates.}
The projection-based interpolation operators $\Pi^{\curl}_p$ and $\Pi^{\div}_p$
preserve piecewise polynomial vector fields and provide conforming
approximations in $\bH(\curl)$ and $\bH(\div)$, respectively.
Therefore, using the standard Bramble-Hilbert argument and scaling,
Theorems~\ref{thm_H(curl)_estimate} and~\ref{thm_H(div)_estimate} extend
to the corresponding optimal $hp$-estimates on sequences of quasi-uniform
meshes of triangles and/or parallelograms satisfying the standard shape regularity
assumptions.

\bibliographystyle{siam}
\bibliography{bib,heuer,fem}

\end{document}